\newtheorem{Theorem}{Theorem}
\newtheorem{Corollary}{Corollary}
\newtheorem{Lemma}{Lemma}
\title{Distributional properties of fluid queues busy period and first passage times}
\author{Zbigniew Palmowski \footnote{Corresponding author e-mail address: zbigniew.palmowski@gmail.com. This work is partially supported by Polish National Science Centre Grant
No. 2016/23/B/HS4/00566 (2017-2020).}
       }
\begin{document}
\maketitle

\begin{abstract}
In this paper we analyze the distributional properties of a busy period in an on-off
fluid queue and the a first passage time
in a fluid queue driven by a finite state Markov process.
In particular, we show that in Anick-Mitra-Sondhi model the first passage time has a IFR distribution and the busy period has a DFR distribution.
\end{abstract}

\noindent {\bf Keywords:} Fluid queue; busy period;
IFR and DFR distributions; Laplace transform

\newcommand{\halmos}{{\mbox{\, \vspace{3mm}}} \hfill
\mbox{$\Box$}}

\section{Introduction}

Fluid queues are used to represent systems where some quantity accumulates or is depleted; gradually over time, subject to some random (usually Markov) environment.
The stochastic fluid queues offer powerful modeling ability for a wide range of real-life systems of significance.
They are highly used in performance evaluation of telecommunication systems, modelling dams and health systems, in
environmental sciences, biology, physics, etc; see \cite{Kulkarni} for overview and references therein.
Existing theoretical frameworks of analysing this type of queues come
mainly from stochastic processes theory and
so-called analytic methods.
In this paper we consider two kinds of fluid queues.

The first fluid queue analyzed in this paper is so-called {\it on-off} fluid queue.
In this queue we have $N$ sources.
By an {\it on-off} $i$th flow ($i=1,2,\ldots,N$)
we  mean a $0-1$ process $\xi_i(t)$, in which
consecutive off-periods alternate with on-periods.
Random on and off times are independent of each other.
In this paper we assume that the silence periods of $i$th source ($i=1,2,\ldots,N$)
are exponentially distributed with parameter $\lambda_i$, whereas the activity periods are generally
distributed with a distribution function $A_i$ which is absolutely continuous with the density function $a_i$
having finite mean $\int_0^\infty xa_i(x)\; dx<+\infty $.
The input rate $r_i$ of the $i$th source
is larger than the output rate of
the infinite buffer which is assumed to be $1$.
Then the input rate to the buffer at time $t$ equals
$Z(t)=\sum_{\ell=1}^N r_\ell \xi_\ell(t)$
and the constant output rate is $1$,
wherein
the {\em buffer content process } $Q(t)$ is governed by the equation
\begin{eqnarray*}
\frac{d Q(t)}{dt}=\left\{
\begin{array}{ll}
Z(t)-1, & \mbox{for $Q(t)>0$}\\
(Z(t)-1)_+, & \mbox{for $Q(t)=0$.}
\end{array}
\right.
\end{eqnarray*}
We assume the stability condition
$$\sum_{i=1}^Nr_i\frac{\int_0^\infty xa_i(x)\; dx}{\int_0^\infty xa_i(x)\; dx+1/\lambda_i}<1$$ under which
the steady-state buffer content is well-defined.
The seminal model is so-called of Anick-Mitra-Sondhi model, where
$r_i=r$ and $A_i$ has exponential distribution; see \cite{AMS} for details.

In the second fluid model, called {\it Markov fluid queue}, the process
$\{(Q(t), J(t)),t\geq 0\}$ consists of continuous level process $Q(t)\geq 0$, a phase variable $J(t)\in\mathcal{I}=\{1,2,\ldots,N\}$ for some fixed $N\in \mathbb{N}$ and real-valued rates $r_i$, $i\in\mathcal{I}$, such that
\begin{itemize}
\item the phase process $\{J(t),t\geq 0\}$ is an irreducible, continuous-time Markov chain with finite state space $\mathcal{I}$;
\item when $J(t)=i$ and $Q(t)>0$, the rate of change of $Q(t)$ at time $t$ is given by $r_i$, that is, $\frac{d\;Q(t)}{d\;t} =r_i$;
\item when $J(t)=i$ and $Q(t)=0$, the rate of change of $Q(t)$ at time $t$ is given by $\max\{0,r_i\}$.
\end{itemize}
In this case we assume the following stability condition as well to define the steady-state
buffer content:
$$\sum_{i=1}^N p_i r_i<0,$$
where $p_i$ $(i=1,2,\ldots,N$) denotes the stationary distribution of $J(t)$.

The main goal of this article is to analyze the distributional
properties of the busy periods and the first passage times in the fluid queue theory.
We refer to \cite{Barlow} and \cite[Sec. 1.4 and 1.7]{Rysiek} to the properties of DFR and IFR distributions
and to the distributions with completely monotone density functions.

Firstly, in next section, we prove that in the first fluid model, under assumption that
activity density function $a_i$ is completely monotone for fixed $i\in\{1,2,\ldots, N\}$,
the busy periods starting with an activity period of $i$th fixed source
has a completely monotone density function. Hence it has Decreasing Failure Rate
(it is so-called a DFR distribution).
We recall that function $f$ satisfying $(-1)^kf^{(k)}(x)\geq 0$, $k=0,1,2,\ldots$, is called completely
monotonic and for a distribution function $F$ its failure rate is defined via
$$r_F(x)=\frac{F^\prime(x)}{1-F(x)}.$$
Hence, in particular, the busy period in the Anick-Mitra-Sondhi model has
a DFR distribution.
It is worth to add that the fact that a density is completely monotone means
that it is the limit (in a weak sense) of some sequence of finite mixtures of exponentials and
hence it is infinitely divisible;
see \cite{Steutel} for details.

Later we prove that for the second fluid queue, the first passage time
over fixed level of the buffer content process $Q(t)$
has a Increasing Failure Rate (it is so-called a IFR distribution).

Some of the techniques used in this paper are strongly related
with the tools applied in classical queueing theory concerning single server queues, like for example for $GI|GI|1$ queue.
In particular, the key equation \eqref{LTbusy} for the proof of main Theorem \ref{mainthm1}
is a consequence of the work-conservation principle.
For the classical theory of queues, in particular for the analysis of the busy period of single server queue,
we refer to \cite{Cohen}.

Distributional properties of the busy period and first passage times have been discussed before by a number of authors
in queueing theory.
Notable amongst these are \cite{1,2,Keilson,4,5,6,15}
in the context of Markov chains,
\cite{7,8,9,10,11,12,13,14, 16,18} related strongly
to fluid queues and \cite{17} that put fluid queues
into the framework of Markov additive processes.

\section{Main results}\label{sec:main}
\subsection{Busy period}\label{sec:busy}

Let consider classical on-off model, where we have $N$ on-off sources with silence times of $i$ th source which are exponential
distributed with intensity $\lambda_i$ and activity periods of $i$th source have completely monotone generic distribution $A_i$
which is absolutely continuous with the density function $a_i$ and
with the finite mean and with the Laplace transform $$\alpha_i(\theta)=\int_0^\infty e^{-\theta x}a_i(x)\;dx.$$
Source $i$ constantly transmits at rate $r_i\geq 1$ when active and the total output rate from the buffer equals $1$.
Note that silence periods are exponentially distributed with the intensity $\lambda=\sum_{i=1}^N\lambda_i$.
Denote by $P_i$ the busy period that starts from activity of $i$th source and let $b_i$ will be its density function
with Laplace transform $\pi_i(\cdot)$.

Our first main result is the following theorem.
\begin{Theorem}\label{mainthm1}
Assume that for fixed $i\in\{1,\ldots,N\}$ the density $a_i$ of activity of $i$th source
is a completely monotone function. Then the busy period $P_i$
starting from activity of $i$th source has a completely monotone density function $b_i$.
\end{Theorem}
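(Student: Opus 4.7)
The strategy is to work on the Laplace-transform side: by Bernstein's theorem, $b_i$ is completely monotone if and only if $\pi_i$ is a Stieltjes transform, that is, admits a representation $\pi_i(\theta)=\int_0^\infty \mu(du)/(\theta+u)$ for a non-negative Radon measure on $[0,\infty)$. It therefore suffices to show that $\pi_i$ lies in the Stieltjes class.

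The first task is to derive equation \eqref{LTbusy} promised in the introduction. Using the work-conservation principle of \cite{Cohen}, I express $\pi_i$ through a Tak\'acs-type fixed-point identity by decomposing the busy period $P_i$ into its initial source-$i$ activity $A_i$ and the additional bursts contributed whenever some other source $j$ wakes up from its exponentially distributed silence during the course of $P_i$; each such event initiates a sub-busy-period whose transform is built from $(\pi_1,\ldots,\pi_N)$. The resulting identity has the schematic form
$$\pi_i(\theta)=\alpha_i\!\left(\theta+\sum_{j=1}^N\lambda_j\bigl(1-\psi_j(\theta)\bigr)\right),$$
where each $\psi_j(\theta)$ is itself the Laplace transform of a non-negative random variable whose density inherits a mixture-of-exponentials structure from $b_1,\ldots,b_N$.

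Since $a_i$ is completely monotone, Bernstein's theorem yields $a_i(x)=\int_0^\infty e^{-ux}\mu_i(du)$ and hence $\alpha_i(\theta)=\int_0^\infty \mu_i(du)/(\theta+u)$ is a Stieltjes transform. I would then invoke three standard closure results from the theory of Bernstein functions (see e.g.\ Schilling, Song and Vondra\v{c}ek): (a) if a non-negative random variable $X$ has a completely monotone density then $1-E[e^{-\theta X}]$ is a \emph{complete} Bernstein function; (b) the family of complete Bernstein functions is a convex cone containing $\theta$; (c) the composition of a Stieltjes transform with a positive complete Bernstein function is again Stieltjes. Combining these, the inner argument of $\alpha_i$ in the displayed equation above is complete Bernstein whenever the $\psi_j$'s are Laplace transforms of completely monotone densities, and then $\pi_i$ is itself Stieltjes.

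To make this circular dependence genuine I would set up a Picard iteration $(\pi_1^{(n+1)},\ldots,\pi_N^{(n+1)})=T(\pi_1^{(n)},\ldots,\pi_N^{(n)})$, where $T$ is the operator associated with the system of which \eqref{LTbusy} is the $i$th component, starting from a trivial Stieltjes iterate (e.g.\ $\pi_j^{(0)}\equiv 0$). Induction on $n$ based on (a)--(c) shows that every iterate is a Stieltjes transform; monotone convergence in $\theta$, together with closure of the Stieltjes class under pointwise convergence of uniformly bounded families, then lets me pass to the limit and conclude that the true fixed point $\pi_i$ is Stieltjes, so $b_i$ is completely monotone. The main obstacle I anticipate is not the closure machinery but the combinatorial bookkeeping in the first step: establishing \eqref{LTbusy} in a form where the auxiliary $\psi_j$'s are manifestly Laplace transforms of completely monotone densities, despite the simultaneous activation and overlap of several sources during the initial activity $A_i$ of source $i$.
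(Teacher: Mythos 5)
Your proposal is correct in substance and structurally parallels the paper's proof: both rest on the fixed-point equation \eqref{LTbusy} obtained from work conservation (in the paper, by giving the sources $j\neq i$ preemptive priority over source $i$, with the detailed derivation delegated to \cite[Thm. 5.2]{OnnoDumas}), followed by an iteration whose iterates stay in the completely monotone class and a passage to the limit. Where you genuinely diverge is in the proof of the closure step. The paper isolates it as Lemma \ref{lemmathm1}: if $K$ and the $L_j$ have completely monotone densities, then $k\left[\theta+\sum_j c_j(1-l_j(\theta))\right]$ is again the transform of a completely monotone density; this is proved following Keilson \cite{Keilson2} by reducing, via Steutel's mixture-of-exponentials representation \cite{Steutel}, to $K$ exponential, in which case the transform is rational with interlacing poles and zeros on the negative axis and positive residues, hence a finite mixture of exponentials. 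You instead invoke the complete Bernstein/Stieltjes calculus: $\alpha_i$ is Stieltjes, $1-\pi_j$ is complete Bernstein precisely when $b_j$ is completely monotone, complete Bernstein functions form a convex cone containing $\theta$, and a Stieltjes function composed with a complete Bernstein function is Stieltjes. These three facts are correct and yield the same closure property more abstractly and arguably more cleanly; the paper's residue argument is more elementary and self-contained and gives the extra information that the approximating transforms are finite mixtures of exponentials. Two caveats. First, your schematic form of \eqref{LTbusy} is not the true one: the correct equation is $\pi_i(\theta)=\alpha_i\left[r_i\theta+\lambda_i(r_i-1)(1-\pi_i(\theta))+\sum_{j\neq i}\lambda_j r_i(1-\pi_j(\theta))\right]$, which is self-referential in $\pi_i$, carries the time scaling $r_i\theta$, and has non-negative coefficients $\lambda_i(r_i-1)$ and $\lambda_j r_i$ (this is where $r_i\geq 1$ enters). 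The auxiliary $\psi_j$ you worry about are just the busy-period transforms themselves, so the ``manifestly completely monotone'' obstacle you flag dissolves into the induction hypothesis of your own Picard scheme; no bookkeeping beyond what \cite{OnnoDumas} supplies is needed. Second, like the paper (which starts from $\pi_{i,0}\equiv 1$ and argues monotone decrease), you should add a word on why the limit of your iteration started from $\pi_j^{(0)}\equiv 0$ is the actual busy-period transform rather than merely some solution of the fixed-point system, e.g.\ by identifying the $n$th iterate with the transform of the busy period truncated at the $n$th generation of sub-busy periods, so the limit is the minimal (and correct) solution.
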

By \cite[Prop. 5.9A, p. 75]{Keilson} we can conclude the following crucial fact.
\begin{Corollary}
$P_i$ has a DFR distribution.
\end{Corollary}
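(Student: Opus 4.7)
The plan is to read the Corollary as a one-line consequence of Theorem~\ref{mainthm1} together with the classical fact --- precisely Prop.~5.9A of \cite{Keilson} --- that any distribution on $(0,\infty)$ whose density is completely monotone has a decreasing failure rate. Theorem~\ref{mainthm1} supplies the hypothesis; the sketch below indicates how one recovers the implication ``completely monotone density $\Rightarrow$ DFR''.

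First, I would apply Bernstein's theorem to the completely monotone function $b_i$ to obtain the integral representation
$$b_i(x) = \int_0^\infty e^{-\theta x}\, d\mu(\theta), \qquad x>0,$$
for a uniquely determined finite non-negative Borel measure $\mu$ on $(0,\infty)$. Integrating from $x$ to $\infty$ and applying Fubini (legitimate because $b_i\in L^1$ and hence $\int_0^\infty \theta^{-1}\, d\mu(\theta)=1$), the survival function is
$$\bar B_i(x) := \int_x^\infty b_i(y)\, dy = \int_0^\infty \frac{e^{-\theta x}}{\theta}\, d\mu(\theta),$$
so $\bar B_i$ itself is a mixture of exponentials with weight measure $\theta^{-1}d\mu(\theta)$.

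Next, I would show $\bar B_i$ is log-convex by applying H\"older's inequality to that mixture: for all $x_1,x_2\geq 0$ and $t\in[0,1]$,
$$\bar B_i\!\bigl(tx_1+(1-t)x_2\bigr) = \int_0^\infty \frac{(e^{-\theta x_1})^t(e^{-\theta x_2})^{1-t}}{\theta}\, d\mu(\theta) \leq \bar B_i(x_1)^t\,\bar B_i(x_2)^{1-t}.$$
Since the failure rate satisfies $r_{B_i}(x) = -\tfrac{d}{dx}\log\bar B_i(x)$, log-convexity of $\bar B_i$ is equivalent to $-\log\bar B_i$ being concave, which is exactly the statement that $r_{B_i}$ is non-increasing, i.e.\ $P_i$ is DFR.

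The \emph{main obstacle} is essentially nonexistent: there is no fresh probabilistic content to prove here beyond Theorem~\ref{mainthm1}. The only technicalities are verifying that Bernstein's representation applies on the whole half-line and that the Fubini interchange producing $\bar B_i$ is licit, both immediate from $b_i$ being a genuine probability density. Everything else is the standard Bernstein/H\"older route packaged in \cite[Prop.~5.9A, p.~75]{Keilson}, so the Corollary reduces to an invocation of that reference.
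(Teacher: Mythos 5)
Your proposal is correct and matches the paper, which likewise obtains the Corollary as an immediate consequence of Theorem~\ref{mainthm1} combined with the cited fact from \cite[Prop.~5.9A, p.~75]{Keilson} that a completely monotone density implies DFR; your Bernstein--H\"older sketch simply unpacks that reference. One small inaccuracy that does not affect the argument: the Bernstein measure $\mu$ need not be finite (e.g.\ when $b_i$ blows up at $0$), but the constraint $\int_0^\infty \theta^{-1}\,d\mu(\theta)=1$ and the Tonelli/H\"older steps go through unchanged.
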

Before we give the proof of the main results we start from proving the following fact.
\begin{Lemma}\label{lemmathm1}
Assume that some probability density functions $K$ and $L_i$ ($i=1,\ldots, N$) are completely monotone with Laplace transforms $k(\cdot)$ and $l_i(\cdot)$, respectively.
Then the random variable with the Laplace transform:
\[\beta(\theta)=k\left[\theta + \sum_{i=1}^Nc_i(1-l_i(\theta))\right],\]
where $c_i\geq 0$, has a completely monotone density function.
\end{Lemma}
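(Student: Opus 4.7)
The plan is to use the Bernstein representation of completely monotone probability densities throughout. A density on $(0,\infty)$ is completely monotone if and only if it is a mixture of exponentials: there is a positive measure $\sigma$ on $(0,\infty)$ with $\int_0^\infty \sigma(ds)/s=1$ such that $f(x)=\int_0^\infty e^{-sx}\sigma(ds)$; equivalently, its Laplace transform has the Stieltjes form $\hat f(\theta)=\int_0^\infty \sigma(ds)/(s+\theta)$ with $\hat f(0)=1$ and $\hat f(\infty)=0$. Proving the lemma therefore reduces to exhibiting such a Stieltjes representation for $\beta$.

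Applying this to the hypotheses, I would write $k(\theta)=\int_0^\infty \mu(ds)/(s+\theta)$ and $l_i(\theta)=\int_0^\infty \eta_i(dr)/(r+\theta)$ for suitable positive measures $\mu,\eta_i$. Substituting the Bernstein form of each $L_i$ into $u(\theta):=\theta+\sum_i c_i(1-l_i(\theta))$ and swapping integrals yields
$$u(\theta)=\theta+\int_0^\infty (1-e^{-\theta x})\,m(x)\,dx,\qquad m(x)=\sum_{i=1}^N c_i L_i(x),$$
so $u$ is the Laplace exponent of a subordinator with unit drift whose Lévy density $m$ is itself completely monotone. This is precisely the defining property of a \emph{complete Bernstein function}.

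The crux is then the compositional identity that a Stieltjes function composed with a complete Bernstein function is again Stieltjes. Applied to $\beta=k\circ u$, this immediately delivers a representation $\beta(\theta)=\int_0^\infty \rho(dr)/(r+\theta)$, from which the density $b(x)=\int_0^\infty e^{-rx}\rho(dr)$ is completely monotone; the boundary values $\beta(0)=k(u(0))=k(0)=1$ and $\beta(\infty)=0$ force $b$ to be a probability density. A more hands-on formulation of the same step rewrites
$$\beta(\theta)=\int_0^\infty \frac{s}{s+u(\theta)}\cdot\frac{\mu(ds)}{s},$$
where $\mu(ds)/s$ is a probability measure, and reduces to showing that each inner factor $\theta\mapsto s/(s+u(\theta))$ is Stieltjes; the conclusion then follows because a mixture of Stieltjes functions (equivalently, of completely monotone densities) is again of the same type.

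The main obstacle is the step showing that $1/(s+u(\theta))$ is Stieltjes, equivalently that the reciprocal of the complete Bernstein function $s+u$ is Stieltjes. I would either cite this from the standard theory of Bernstein functions (e.g., the monograph of Schilling, Song and Vondracek), or prove it directly via a Pick-function argument: $u$ extends analytically to $\mathbb{C}\setminus(-\infty,0]$ with $\mathrm{Im}\,u(z)\geq 0$ for $\mathrm{Im}\,z>0$, a property inherited by $s+u$, so $1/(s+u)$ has nonpositive imaginary part in the upper half-plane, which by the Nevanlinna representation characterizes Stieltjes functions.
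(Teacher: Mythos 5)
Your proof is correct, but it takes a genuinely different route from the paper's. The paper follows Keilson's exponential-spectra argument: it first notes $\beta$ is the transform of a bona fide density via a convolution-series expansion, then approximates $K$ (and the $L_i$) by finite mixtures of exponentials, so that $\beta$ becomes a rational function whose denominator is increasing in $\theta$ between its poles; the zeros therefore interlace the poles on the negative real axis, all residues in the partial-fraction expansion are positive, and $\beta$ is a finite mixture of exponentials, with the general case recovered by a weak limit. You avoid approximation altogether by invoking the Bernstein--Stieltjes calculus: $u(\theta)=\theta+\sum_i c_i(1-l_i(\theta))$ is a complete Bernstein function because its L\'evy density $\sum_i c_i L_i(x)$ is completely monotone and integrable, $k$ is a Stieltjes function, and composing a Stieltjes function with a complete Bernstein function (equivalently, showing each kernel $s/(s+u(\theta))$ is Stieltjes because the reciprocal of the complete Bernstein function $s+u$ is Stieltjes) yields a Stieltjes function; the boundary values $\beta(0)=1$ and $\beta(\infty)=0$ then pin down the representation as the Laplace transform of a completely monotone probability density. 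What your approach buys: no limiting step is needed (the paper leaves implicit both the convergence of the approximating transforms and the preservation of complete monotonicity in the limit), and it exposes the structural reason the lemma holds, namely that $u$ is the Laplace exponent of a unit-drift subordinator with completely monotone L\'evy density. What it costs: reliance on the standard but nontrivial theory of complete Bernstein, Stieltjes and Pick functions (e.g.\ the Schilling--Song--Vondra\v{c}ek monograph), whereas the paper's interlacing-of-zeros argument is elementary and essentially self-contained. If you take the direct Pick-function route, do record that $s+u$ is zero-free on $\mathbb{C}\setminus(-\infty,0]$ (strict positivity of the imaginary part off the real axis, positivity on $(0,\infty)$), so that $1/(s+u)$ is indeed analytic there; with that detail added, your argument is complete.
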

\begin{proof}
We will now mimic the idea presented in the proof of \cite[Thm. 2.1]{Keilson2}.
Denoting $c=\sum_{j=1}^Nc_j$,
\[ \beta(\theta)=\sum_{m=0}^\infty\int_0^\infty e^{-\theta x}e^{-cx} \frac{(x\sum_{j=1}^N c_jl_j(\theta))^m}{m!}K(x)\;dx
\]
is the transform of sums of convolutions of some densities with support on positive half-line.
Moreover, $\beta(0)=k(0)=1$. Hence $\beta(\cdot)$ is the Laplace transform of some random variable having absolutely continuous distribution.
Further, since $K$ is completely monotone, it is the limit (in a weak sense) of some sequence of finite mixtures of exponentials;
see \cite{Steutel}. Thus it suffices to prove
our theorem only when $K$ is an exponential density function with a parameter, say, $s$. In this case
\[\beta(\theta)=s/\left\{\theta +s+c-\sum_{j=1}^N\sum_{i=1}^{K_j}p_{ji}\gamma_{ji}/(\theta+\gamma_{ji})\right\},
\]
where $K_{j}$ are finite and $p_{ji}, \gamma_{ji}>0$ with $\sum_{i=1}^{K_j}p_{ji}=1$. The denominator
$$\theta +s+c-\sum_{j=1}^N\sum_{i=1}^{K_j}p_{ji}\gamma_{ji}/(\theta+\gamma_{ji})$$
is an increasing function of $\theta$
(except simple poles $-\gamma_{ji}$ where its is not well-defined). It is than clear that this denominator has a simple zero between each pair of adjacent poles and a zero in the intervals
$(-\infty, -\max_{j,i}\gamma_{ji})$ and $(-\min_{j,i}\gamma_{ji}, 0)$.   Hence there are $\sum_{j=1}^NK_j+1$ zeros on the negative real $\theta$ axis.
Since $\beta(\theta)$ is rational with the denominator of order $\sum_{j=1}^NK_j+1$
and this denominator is monotonic,
all residues are positive.
Thus from Cauchy's residue theorem it follows that
the density with the Laplace transform $\beta(\theta)$ is completely monotone
(it is a finite mixture of exponentials).
This observation completes the proof of above statement.
\end{proof}
We are now ready to prove Theorem \ref{mainthm1}.
\begin{proof}
Note that the speed at which the buffer size increases at some time $t$ is completely determined
by which sources are active at $t$.
Thus transmitting fluid from some active sources before any other fixed active source
does not affect the busy period.
In other words, we can give preemptive priority
to all sources $j$ for $j \neq i$ over fixed source
$i$.
Hence using the work-conservation arguments we can conclude that,
as far as busy periods are concerned, our initial fluid model is equivalent to
the fluid model with $i$th source and
one source replacing all sources $j\neq i$
with preemptive priority over this $i$th source.
In this way, the sources $j \neq i$ do not see the $i$th source
and therefore they generate their own busy periods.
Moreover, the source $i$ is served only during the corresponding
idle periods. This argument produces the following key system of equations for the Laplace transforms of the busy periods
\begin{equation}\label{LTbusy}
\pi_i(\theta)=\alpha_i\left[r_i\theta +\lambda_i(r_i-1)(1-\pi_i(\theta))+\sum_{j\neq i} \lambda_jr_i(1-\pi_j(\theta))\right].
\end{equation}
We refer to \cite[Thm. 5.2]{OnnoDumas} for detailed proof of this fact.

Consider now the sequence of distribution functions with positive support whose Laplace transforms are generated recursively by:
\[\pi_{i,0}(\theta)=1,\]
\begin{equation}\label{odnosnik}\pi_{i,m+1}(\theta)= \alpha_i\left[r_i\theta +\lambda_i(r_i-1)(1-\pi_{i,m}(\theta))+\sum_{j\neq i} \lambda_jr_i(1-\pi_{j,m}(\theta))\right].\end{equation}
Because of the positive support and finite first moment of $A_i$, Laplace transform $\alpha_i$ decreases with $\theta$ and $\alpha_i^\prime(\theta)<0$ on $(0,\infty)$.
Hence \[\pi_{i,m+1}(\theta)-\pi_{i,m}(\theta)=\int_{r_i\theta +\lambda_i(r_i-1)(1-\pi_{i,m-1}(\theta))+\sum_{j\neq i} \lambda_jr_i(1-\pi_{j,m-1}(\theta))}^{r_i\theta +\lambda_i(r_i-1)(1-\pi_{i,m}(\theta))+\sum_{j\neq i} \lambda_jr_i(1-\pi_{j,m}(\theta))}\alpha_i^\prime(u)\;du<0.\]
Thus $\pi_{i,m}(\theta) < 1$ because $\pi_{i,0}(\theta)=1$. Moreover, we have that $r_i<1$. Hence
the argument of $\alpha_i$ in \eqref{odnosnik} can be estimated from above by
$$r_i\theta +\lambda_i(r_i-1)(1-\pi_{i,m}(\theta))+\sum_{j\neq i} \lambda_jr_i(1-\pi_{j,m}(\theta))
\leq r_i\theta+\sum_{j\neq i} \lambda_j\leq r_i\theta+\lambda$$
for $\lambda =\sum_{j=1}^N \lambda_j$.
This gives that $\pi_{i,m+1}(\theta)\geq \alpha_i(r_i\theta+\lambda)$ and therefore the sequence $\pi_{i,m}$
is uniformly bounded and so decreases to $\pi_i$ solving (\ref{LTbusy}).
Now Lemma \ref{lemmathm1} completes the proof.
\end{proof}

\subsection{First passage times}\label{sec:fp}
We will consider now the second fluid queue model with the random environment process $J(t)$ on the finite state space $\mathcal{I}=\{1,\ldots, N\}$.
Recall that $r_i$ is a netput intensity when $J(t)=i$. We order set $\mathcal{I}$ in such way that $r_i\leq r_j$ as $i< j$.
We will discretize the time and consider Markov chain $X_n =(Q(\frac{n}{m}), J(\frac{n}{m}))$ where $Q(t)$ is a buffer
content process at time $t$. We will consider partial ordering on $R\times \mathcal{I}$ with
$(x,i)\leq (y,j)$ iff $x\leq y$ or $x=y$ and $i\leq j$.

\begin{Theorem}
The first passage time $\tau(x)=\inf\{t\geq 0: Q(t)>x\}$ has increasing failure rate (is IFR).
\end{Theorem}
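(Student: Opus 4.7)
The plan is to discretize time, establish a TP$_2$ and stochastic-monotonicity property for the embedded Markov chain $X_n^{(m)} = (Q(n/m), J(n/m))$ in the lexicographic order introduced above, apply a classical first-passage theorem (Keilson or Brown--Chaganty style) to deduce discrete IFR for the hitting time of the upper set $\{(y,j):y>x\}$, and finally pass to the continuous-time limit $m\to\infty$.

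First I would set $\tau_m(x) = (1/m)\inf\{n \geq 0 : Q(n/m) > x\}$ and observe that $\tau_m(x) \downarrow \tau(x)$ almost surely as $m \to \infty$, using that $t \mapsto Q(t)$ is continuous and $J$ is right-continuous. Since pointwise limits of log-concave survival functions are log-concave, it suffices to prove discrete IFR for $\tau_m(x)$ uniformly in $m$, i.e.\ that $P(\tau_m(x) > n)$ is log-concave in $n$.

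Next I would verify two properties of the one-step kernel $K_m((x,i),\cdot)$ of $X_n^{(m)}$ with respect to the lexicographic order: (a) stochastic monotonicity, via a pathwise coupling of two copies of the chain started from $(x,i)\leq (y,j)$, exploiting that above zero $Q$ evolves as an additive shift of the integrated phase rate and reflection at $0$ only reinforces the ordering, while a monotone coupling of the finite-state phase $J$ can be constructed using the assumed ordering of states by the rates $r_i$; and (b) TP$_2$ in the lexicographic order, using that over an interval of length $1/m$ the phase transition matrix $e^{G/m}$ is $I + O(1/m)$ and the level component is a deterministic nonexpansive function of the integrated phase path, so that TP$_2$ of the phase increments propagates to the joint kernel on $[0,\infty)\times \mathcal{I}$. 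With (a) and (b) in hand, the upper set $U_x = \{(y,j) : y > x\}$ is increasing in the lexicographic order, and the Keilson--Brown--Chaganty theorem gives that $P(\tau_m(x) > n)$ is log-concave in $n$. Letting $m \to \infty$ transfers log-concavity to $P(\tau(x) > t)$ and yields the desired IFR property.

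The main technical obstacle will be step (b): the kernel $K_m$ lives on the hybrid state space $[0,\infty)\times\mathcal{I}$ and carries an atom at $Q=0$ produced by the reflection, so the TP$_2$ inequality must be verified in a stratified way, separately on the smooth interior and along the boundary where distinct initial levels can collapse to the same mass at zero. A natural route is to regularize the reflection by replacing it with absorption at $-\varepsilon$, check TP$_2$ in this auxiliary model, and then send $\varepsilon \downarrow 0$; the finite state space of $J$ keeps the $e^{G/m}$ analysis elementary. The passage to the limit $m \to \infty$ is the easier step, since discrete log-concavity is preserved under pointwise convergence of survival functions and $P(\tau_m(x) > t) \to P(\tau(x) > t)$ at continuity points by monotone convergence of $\tau_m(x)$ to $\tau(x)$.
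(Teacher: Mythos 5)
Your plan coincides with the paper's own proof: the paper likewise discretizes via $X_n=(Q(\frac{n}{m}),J(\frac{n}{m}))$, verifies that the one-step kernel is $TP_2$ with respect to the same ordering on $\mathbb{R}\times\mathcal{I}$, invokes the $TP_2\Rightarrow$IFR first-passage theorem (Durham--Lynch--Padgett, Th.~3.1, i.e.\ the Brown--Chaganty/Keilson circle of results you name) to get that the discrete hitting time is IFR, and then passes to the limit $m\to\infty$, using that the discrete and continuous running suprema differ by at most $r_N\frac{1}{m}$ and that $\sup_{s\leq t}Q(s)$ has a density. The only difference is in how the $TP_2$ property is checked: the paper does a short direct case analysis exploiting that over one time step the level moves essentially deterministically at rate $r_i$, whereas you propose the $e^{G/m}=I+O(1/m)$ expansion together with an $\varepsilon$-regularization of the reflecting boundary, which is a more laborious route to the same intermediate fact.
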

\begin{proof}
In the first step we prove that
$\tau^m(x)=\min\{n\in N: X_n\geq (x,1)\}$
is IFR and then we approximate $\tau(x)$ by $\tau^m(x)$ as $m\to\infty$.
To prove the first statement by \cite[Th. 3.1]{DLP} it suffices to prove that
$P(X_1\leq z|X_0=w)$
is $TP_2$ in $z$ and $w$ belonging to $R\times \mathcal{I}$ which means that for $x<y$, $a<b$ and $i_3< i_4$ if $a=b$ or $i_1< i_2$ if $x=y$ we have
\begin{equation*}\frac{P(X_1\leq (y, i_2)|X_0=(a,i_3))}{P(X_1\leq (x, i_1)|X_0=(a, i_3))}\leq \frac{P(X_1\leq (y, i_2)|X_0=(b, i_4))}{P(X_1\leq (x, i_1)|X_0=(b, i_4))}\end{equation*}
or that
\begin{equation}\label{TP2}
P(X_1\leq (x, i_1)|X_1\leq (y, i_2), X_0=(a, i_3))\geq P(X_1\leq (x, i_1)|X_1\leq (y, i_2), X_0=(b, i_4)).\end{equation}
Heuristically, the total positive of order two $TP_2$ ordering between distributions
corresponds to ordering of positive dependence; see \cite{DLP}
and \cite{Rysiek} for further details.

Now, observe that the LHS of (\ref{TP2}) equals $1$ if $x>a+r_{i_3}$. Similarly, RHS equals $0$ if $x< b+r_{i_4}$.
For the same reasons $y\geq b+r_{i_4}> a+r_{i_3}$.  Hence the only possible left case is when $a=b$ and
$a+r_{i_4}\leq x\leq a+r_{i_3}$ which is impossible by our assumptions.
To conclude $\tau^m(x)$ is IFR which is equivalent to the statement that $\log P(\tau^m(x)>t)$ is concave.

It suffices thus to prove now that $P(\tau(x)>t)=\lim_{m\to\infty}P(\tau^m(x)>t)$ or that $P(\sup_{s\leq t}Q(s)\leq x)=\lim_{m\to\infty}
P(\sup_{k/m\leq t}X_k\leq x)$. Note that $\sup_{s\leq t}Q(s)-\sup_{k/m\leq t}X_k\leq r_N\frac{1}{m}\rightarrow 0$ as $m\to\infty$.
Moreover, clearly $\sup_{s\leq t}Q(s)$ has density function well-defined. This completes the proof.
\end{proof}

\subsubsection*{Acknowledgements}
Zbigniew Palmowski thanks G.Latouche and G. Nguyen for inspiring discussions.

\end{document}